\begin{document}
\title{Remarks on the semi-classical Hohenberg-Kohn functional\footnote{The author is pleased to acknowledge the support of a University of Alberta start-up grant and a National Sciences and Engineering Research Council of Canada Discovery Grant.}}\author{Brendan Pass\footnote{Department of Mathematical and Statistical Sciences, 632 CAB, University of Alberta, Edmonton, Alberta, Canada, T6G 2G1 pass@ualberta.ca.}}
\maketitle

\begin{abstract}
In this note, we study an optimal transportation problem arising in density functional theory.  We derive an upper bound on the semi-classical Hohenberg-Kohn functional derived by Cotar, Friesecke and Kl\"{u}ppelberg \cite{cfk2} which can be computed in a straightforward way for a given single particle density.  This complements a lower bound derived by the aforementioned authors.   We also show that for radially symmetric densities the optimal transportation problem arising in the semi-classical Hohenberg-Kohn functional can be reduced to a $1$-dimensional problem.  This yields a simple new proof of the explicit solution to the optimal transport problem for two particles found in \cite{cfk}.  For more particles, we use our result to demonstrate two new qualitative facts: first, that the solution can concentrate on \textit{higher dimensional} submanifolds and second that the solution can be \textit{non-unique}, even with an additional symmetry constraint imposed.
\end{abstract}

\section{Introduction}
In this paper, we study a multi-marginal optimal transportation problem arising in density functional theory 	(DFT) in condensed matter physics.  Optimal transportation is the general problem of coupling two (or, in our case, $N$) probability measures together as efficiently as possible, relative to a given cost function $c$.  This is a rapidly expanding area of mathematical research, with many diverse applications; recent progress is described in the books by Villani \cite{V,V2}.  

To precisely formulate our problem, fix a probability measure $\rho$ on $\mathbb{R}^d$ (the most physically relevant case being $d=3$).  Let $\Pi(\rho)$ be the set of all probability measures $\rho_N$ on $\mathbb{R}^{dN}$ whose marginals are all $\rho$; that is, $(\pi_{i})_{\#} \rho_N = \rho$ for $i=1,2,...,N$, where $\pi_i:\mathbb{R}^{dN} \rightarrow \mathbb{R}^d$ is the $i$th canonical projection.  We then define

\begin{equation*}
C_N(\rho_N):=\int_{\mathbb{R}^{dN}}\sum_{i \neq j} \frac{1}{|x_i-x_j|} d\rho_{N}
\end{equation*}
and
\begin{equation}\label{mk}
E_N[\rho]:=\inf_{\rho_N \in \Pi(\rho)}C_N(\rho_N)
\end{equation}

Readers familiar with optimal transportation will recognize this as the \emph{multi-marginal Monge-Kantorovich optimal transportation problem,} with equal marginals $\rho$ and cost function $c(x_1,x_2,...,x_N):=\sum_{i\neq j}\frac{1}{|x_i-x_j|}$, which we will refer to as the Coulomb cost.  

Density functional theory is a modeling method used by physicists and chemists to understand electron correlations.  It was originally proposed by Hohenberg, Kohn and Sham \cite{HK}\cite{KohnSham}; see \cite{PY} and \cite{FNM} for a detailed introduction. The \emph{Hohenberg-Kohn functional} plays a central role here.  Imagine a system of $N$ electrons, interacting via the Coulomb potential.  Given a prescribed single particle density $\rho$, this functional returns the minimum energy among all $n$-particle wave functions $\psi$ whose single particle density is $\rho$.    The Hohenberg-Kohn functional is given by:

\begin{equation*} 
F_{HK}[\rho]:=\inf_{\psi \rightarrow \rho}\Big( \frac{\hbar}{2m_e}\int_{\mathbb{R}^{dN}}\sum_{i=1}^{N} |\nabla_{x_i}\psi|^2 d\rho_N(x_1,x_2,...,x_N) + {N \choose 2}\int_{\mathbb{R}^{2d}}\frac{1}{|x_1-x_2|}d\rho_2(x_1,x_2)\Big).
\end{equation*}

Here, $\hbar$ is Planck's constant over $2\pi$ and $m_e$ is the mass of an electron.  The notation $\psi \rightarrow \rho$ means that $\rho$ is the single particle density corresponding to the wave function $\psi$, and $\rho_N$ and $\rho_2$ are the $N$ and two particle densities corresponding to $\psi$, respectively. The first term is the quantum mechanical kinetic energy while the second is the Coulomb interaction energy.   Numerically, this expression is unwieldy for large $N$, as the complexity of minimizing over the space of $N$ particle wave-function grows exponentially in $N$.  It turns out that the kinetic energy term can be dealt with relatively easily (see \cite{cfk} for details); one of the major goals of DFT is to approximate the interaction energy of two electrons by a function of the single particle density $\rho$.

Two recent papers by Cotar, Friesecke and Kl\"{u}ppelberg \cite{cfk}\cite{cfk2}, as well as a paper by Buttazzo, De Pascale and Gori-Giorgi \cite{bdpgg}, have revealed interesting connections between this problem and optimal transportation.\footnote{In fact, these connections were already implicitly present in the physics literature, but without rigorous justification \cite{sggs}\cite{seidl}.}  Of particular interest to us in the present work, the work in \cite{cfk2} showed that in the semi-classical limit, $\hbar \rightarrow 0$, the Hohenberg-Kohn functional reduces to $E_N$.  In particular, contributions from the kinetic energy vanish and the antisymmetry property associated with $N$-body wave functions reduces to \textit{symmetry} of the measure $\rho_N$ in the arguments $(x_1,x_2,...,x_N)$.  In fact, given any measure $\rho_N \in \Pi(\rho)$, we have $C_N(\rho_N) = C_N(\tilde{\rho_N})$, where $\tilde{\rho_N}$ is the symmetrization of $\rho_N$; that is, for all Borel $A \subseteq \mathbb{R}^{Nd}$

\begin{equation*}
\tilde{\rho_N}(A) = \frac{1}{N!}\sum_{\sigma \in S_N} \rho_N \big(\{(x_{\sigma(1)},x_{\sigma(2)},...x_{\sigma(N)}): (x_1,x_2,...,x_N) \in A\}\big),
\end{equation*}
where $S_N$ is the permutation group on $N$ symbols.  Therefore, when computing $E_N[\rho]$, we can neglect the symmetry condition entirely.

Optimal transportation problems with two marginals (ie, $N=2$) with a variety of different cost functions have been studied extensively, including results in \cite{cfk} and \cite{bdpgg}on the Coulomb cost; see, for example \cite{bren} \cite{gm} \cite{G} \cite{Caf}, or, for a detailed overview and extensive bibliography, see \cite{V} and \cite{V2}.   However, relatively little is known about optimal transportation problems with more than two marginals, except in certain special cases \cite{gs}\cite{CN}\cite{KS}\cite{C}\cite{P}\cite{P1}\cite{RU}\cite{RU2}\cite{OR}.  

In particular, despite its importance in density functional theory, to this point very little is known about the structure of minimizers of $C_N$ for $N>2$, or the minimal values $E_N[\rho]$.  The main goal of this note is to contribute to this problem. 
  
This paper features two main contributions.  The first is the construction of a measure $\rho_N \in \pi(\rho)$; the immediately yields an upper bound on $E_N[\rho]$, which can be calculated explicitly from the density $\rho$.  When $d=1$, we suspect that this measure is optimal and prove this in a special case, although we leave the general case open.  In higher dimensions, our measure couples every line through the origin to itself, using the aforementioned $1$-dimensional construction.  A lower bound on $E_N[\rho]$ was identified  in \cite{cfk}; precisely,  $E_N[\rho] \geq {N \choose 2}E_2[\rho]=\frac{N(N-1)}{2} E_2[\rho]$.  \footnote{In general, this inequality may be strict, as the optimal measure $\rho_2$ for the two marginal problem may not be the two particle density of some $N$-particle wave function.  As was pointed out in \cite{cfk}, this has to do with the representibility problem for two particle density matrices.  In the semi-classical limit, where wave functions are essentially replaced by probability measures, this is the question of the existences of a measure on $\mathbb{R}^{Nd}$ whose $2d$-dimensional marginals are all equal to $\rho_2$.}  For radially symmetric measures when $N=2$, Cotar, Friesecke and Kl\"{u}ppelberg were able to construct the optimal measure explicitly and therefore calculate $E_2[\rho]$.  Therefore, in this case, we have a lower bound which can be calculated and therefore can be compared to the upper bound we derive.  For one particularly simple $\rho$ we do this.  We note that when $N=2$ and $\rho$ is radially symmetric, our construction coincides with the explicit solution in \cite{cfk}.  For larger $N$, it is not generally the optimal measure.

This may be useful to physicists and chemists, as the explicit construction can be used as a starting point for constructing approximations to the Hohenberg-Kohn functional.  In fact, as is emphasized in \cite{cfk}, the fundamental goal of DFT is to approximate the electron interaction energy (which depends on the two particle density) by a functional of the single particle density.  Our upper bound, which can be explicitly calculated from the single particle density $\rho$, could be a good starting point for more sophisticated approximations.

Our second contribution concerns radially symmetric measures.  In this case, we show that the problem can be reduced to a $1$-dimensional optimal transport problem with a certain effective cost function derived from the Coulomb cost (but not equal to it).  This has two immediate applications.  First, we use this result to provide a  new proof of the optimality of a construction in \cite{cfk} and \cite{bdpgg} for two marginals, which is considerably simpler than the original proof.  Secondly, we show that this reduction implies that for more marginals the support of the optimizer can be more than $d$-dimensional and be non-unique (even with the additional symmetry constraint. This stands in stark contrast to the two marginal case, where solutions are known to be unique and concentrated on the graphs of functions over $x_1$ and so are essentially $d$-dimensional. Similar phenomena, have, however, been observed in multi-marginal problems for other cost functions \cite{CN}\cite{P}.  Let us note that non-uniqueness of minimizers of $C_N$ is also ready present in the $d=1$ case when $N>2$ (see Remark \ref{nonunique}) and is perhaps not too surprising.  On the other hand, I found the fact that there can be more than one minimizer which is \textit{symmetric in the arguments $(x_1,x_2,...,x_N)$} surprising; from a physical standpoint symmetric measures $\rho_N$ represent the semi-classical limits of single particle densities arising from wave functions.

Aside from these direct applications, this reduction result should be useful in deriving numerical methods to evaluate the Hohenberg-Kohn functional, as it reduces an optimization problem over measures on $\mathbb{R}^{Nd}$ to a problem over measures on $\mathbb{R}^{N}$

The idea that the optimizer should come from aligning the radial densities optimally, and positioning the particles on fixed spheres to minimize the Coulomb interaction is implicit in the physics literature \cite{sggs}, but has not been made rigorous until now.  In particular, the explicit construction in the $N=2$ case, though present in \cite{sggs} and \cite{seidl}, was not proven to be optimal until the work in \cite{cfk} and \cite{bdpgg}, by a fairly complicated proof, relying on a general theorem guaranteeing the existence and uniqueness of Monge type solutions.  Our proof here, essentially making rigorous the intuition in \cite{sggs}, is much simpler.  On the other hand, the non-uniqueness we find for larger $N$ does not seem to have been anticipated at all.

The next section is devoted to the derivation of our upper bound on $E_N[\rho]$.  In section 3, we restrict our attention to the radially symmetric case and show that the problem can be reduced to an optimal transportation problem with $1$-dimensional marginals.

I would like to thank Robert McCann for originally pointing out the work in \cite{cfk} to me.  I am also grateful to both Robert and Codina Cotar for very interesting and useful discussions on this topic, and to Gero Friesecke for a very insightful explanation of the form of the semi-classical limit of the Hohenberg-Kohn functional.

\section{An upper bound on $E_N[\rho]$.}
Here we construct a measure $\rho_N \in \Pi(\rho)$, yielding an upper bound on $E_N[\rho]$.  We do this by coupling every line through the origin to itself, via a $1$-dimensional coupling described below.
\subsection{The $1$-dimensional problem}
In this subsection, we consider (\ref{mk}) when $d=1$.  We construct a measure $\rho_N \in \Pi(\rho)$ which we suspect, but do not prove, attains the minimum in (\ref{mk}).  Given a probability measure $\rho$ on $\mathbb{R}$, we divide the real line $\mathbb{R}$ into $N$ subintervals, each with equal mass.  Our measure will then couple together mass from distinct intervals in each of the $N$ copies of $\mathbb{R}$.  This construction ensures that all of the electrons remain isolated from each other and generalizes the $1$-dimensional construction in \cite{cfk} in the radially symmetric case when $N=2$.  This construction is essentially the same as the ones in \cite{bdpgg} and \cite{seidl}, but it's use in constructing an upper bound for non-radial densities is, to the best of my knowledge, new.

Set $r^0 = -\infty$.  Now define $r^i$ recursively by 
\begin{equation*}
r^i=\inf\{t^i:\rho(r^{i-1},t^i) \leq \frac{1}{N}\}, 
\end{equation*}
for $i=1,2,3,...,N-1$.  Set $r^N =\infty$.  Assuming $\rho$ is absolutely continuous with respect to Lebesgue measure, this yields $N$ subintervals of $\mathbb{R}$, $I^i=[r^{i-1},r^i]$, each with $\rho(I^i) = \frac{1}{N}$.  Define $F^i: I^1 \rightarrow I^i$ implicitly by 
\begin{equation*}
\rho(r^{i-1},F^i(t))= \rho(-\infty, t).
\end{equation*}
  Then, letting $\rho^i$ be $\rho$ restricted to $I^i$, we have $F^i_{\#}\rho^1 = \rho^i$ by construction.  It is also worth noting that each $F^i$ is an increasing function, and that $F^1$ is the identity function, $F^1(t)=t$.

Now, we can extend $F^i: \mathbb{R} \rightarrow \mathbb{R}$ as follows: take the image $F^i(I^j)$ to be $I^{j+i-1}$ (here addition is modulo $N$, so that, for example, $I^{N+1} = I^1$).  $F^i : I^j \rightarrow I^{j+i-1}$ is defined implicitly by 
\begin{equation*}
\rho(r^{i+j-2},F^i(t))= \rho(r^{j-1}, t).  
\end{equation*}
It it then clear that $(F^i)_{\#} \rho = \rho$.  Setting $F = (F^1,F^2,....F^N)$, the measure $\rho_N:=F_{\#} \rho$ is in $\Pi(\rho)$.

\newtheorem{local}{Remark}[subsection]

\begin{local}
(Local optimality of $\rho_N$) Note that the measure $\rho_N$ is concentrated on the union of sets of the form $P^i=I^i \times I^{i+1} \times ...\times I^{i+N-1}$, for $i =1,2,...N$ (here, as above, the indices $j$ in $I^j$ should be understood modulo $N$).  Note that $\frac{\partial ^2 c}{\partial x^i \partial x^j} < 0$ for all $i \neq j$.  Now, consider the optimal transportation problem on $P^i$, with marginals $\rho^{i}, \rho^{i+1},...,\rho^{i+N-1}$.  A theorem of Carlier \cite{C}, rediscovered in \cite{P4}, demonstrates that the measure $\rho_N|_{P^i}=F_{\#}\rho_i$ above is the optimal measure.
\end{local}

We suspect that $\rho_N$ is optimal, but do not prove this in general.  Below, we prove this is the special case where $\rho$ is uniform measure on $[0,1]$; a similar argument for the $N=3$ case was worked out in \cite{bdpgg}. 
\newtheorem{1d}[local]{Theorem}
\begin{1d}
Suppose $\rho$ is uniform measure on $[0,1]$.  Then $\rho_N$ is optimal.
\end{1d}

\begin{proof}
As $\rho$ is uniform measure on $[0,1]$, the interval $I^i = [(i-1)/N,i/N]$, and the map $F^i(x_j) = x_j +(i+j-1)/N$, for $x_j \in I^j$.  

Define a continuous function $u:[0,1] \rightarrow \mathbb{R}$ as follows.  Set $u(0) =0$.  For $x \in I^i$, set

\begin{equation*}
\frac{d u}{d x}(x) = \sum_{j < i} \frac{-N^2}{(i-j)^2} + \sum_{j > i} \frac{N^2}{(i-j)^2}
\end{equation*}
Note that this implies  $\frac{d u}{d x}(x)$ is constant on $I^i$, and that $\frac{d u}{d x}(x)$ is non-increasing, so that $u$ is piecewise linear and concave.

We will now show that $c(x_1,x_2,...,x_N) - \sum_{i=1}^Nu(x_i)$ attains its minimum value at every point in the support $spt(\rho_N)$ of $\rho_N$.  This will imply the desired result, as for any measure $\gamma \in \Pi(\rho)$ we have 

\begin{eqnarray*}
\int_{\mathbb{R}^{Nd}} c(x_1,x_2,...,x_N) d\gamma & = & \int_{\mathbb{R}^{Nd}} \big[c(x_1,x_2,...,x_N) - \sum_{i=1}^Nu(x_i) + \sum_{i=1}^Nu(x_i)\big] d\gamma\\
&\geq& M  +  \sum_{i=1}^N \int_{\mathbb{R}^{d}} u(x)) d\rho(x) 
\end{eqnarray*}
with equality when $\gamma = \rho_N$ (here $M$ is the minimum value of $c(x_1,x_2,...,x_N) - \sum_{i=1}^Nu(x_i)$).

Consider the set 

\begin{equation*}
S = \bigcup_{\sigma} \{(F^{\sigma(1)}(x), F^{\sigma(2)}(x),...,F^{\sigma(N)}(x))| x \in [0,1]\}, 
\end{equation*}
where the union is over all permutations $\sigma$ on $1,2,...,N$.  Note that $spt(\rho_N)$ is contained in $S$.  It is straightforward to see that $c(x_1,x_2,...,x_N) - \sum_{i=1}^Nu(x_i)$ is constant on $S$; we now show that any point where this function is minimized must belong to $S$, which will yield the desired result.

As $c =\infty$ whenever $x_i =x_j$ for any $i \neq j$, the minimum of this function must be attained at some point where $x_i \neq x_j$ for all $i \neq j$, ie, at some point where $c$ is smooth.  We must therefore have $\frac{\partial c}{\partial x_i}(x_1,x_2,...,x_N) -\frac{d u} {d x}(x_i)=0$ for all $i$.  We will show that this implies $(x_1,...,x_N) \in S$. 

First of all note that, for $(x_1,x_2,...,x_N) \in S$, a straightforward calculation confirms that $\frac{\partial c}{\partial x_j}(x_1,x_2,...,x_N) -\frac{d u} {d x}(x_j)=0$ for all $j$.

Now, choose a point  $(x_1,x_2,...,x_N)$ such that $\frac{\partial c}{\partial x_j}(x_1,x_2,...,x_N) -\frac{d u} {d x}(x_j)=0$ for all $j$ and assume, without loss of generality, that $x_1<x_2<...<x_N$.  By the pigeon hole principle, we have that $x_i \in I^i$ for at least one fixed $i$.  Now, note that for fixed $y_i =x_i$, the function 
\begin{equation*}
(y_1,...y_{i-1},y_{i+1},...,y_N) \mapsto c(y_1,...,y_N)-\sum_{j=1}^N u(y_j)
\end{equation*}
is strictly convex on the region 
\begin{equation*}
\{(y_1,...y_{i-1},y_{i+1},...,y_N):y_j<y_{j+1} \forall j=1,2,...N\} 
\end{equation*}
and so there is only one point  $(y_1,...y_{i-1},y_{i+1},...,y_n)$ in this region where   $\frac{\partial c}{\partial x_j}(x_1,x_2,...,x_N) -\frac{d u} {d x}(x_j)=0$.  We already have this equality when $y_j = (j-i)+x_i$ for all $j \neq i$ and so we must have $x_j = (j-i)+x_i$, which means $(x_1,x_2,...,x_N) \in S $, as desired.

\end{proof}

\newtheorem{nonunique}[local]{Remark}
\begin{nonunique}\label{nonunique}
(Non-uniqueness of the optimal measure)  It is interesting to note that the optimal measure in this case is not unique for $N>2$.  Indeed, note that $\rho_N$ is not symmetric; it's symmetrization $\tilde{\rho_N}$ is therefore another optimizer.  However, $\tilde{\rho_N}$ is the unique \emph{symmetric} optimizer.  In the next section, we will see that in higher dimensions, there can in fact be more than one \emph{symmetric} optimizer. 
\end{nonunique}

\subsection{An upper bound in higher dimensions}

We now construct our measure, by essentially coupling each line through the origin with the same line in each of the other copies of $\mathbb{R}^d$.  This coupling is done via the $1$-dimensional construction above.

It will be convenient to work in polar-like coordinates, as $\mathbb{R}^d$ is the (almost disjoint) union of lines through the origin.  Note that,  neglecting the origin, $\mathbb{R}^d = \mathbb{R} \times \mathbb{P}^{d-1}$, where $\mathbb{P}^{d-1}$ denotes $n-1$ dimensional projective space.  Let $r \in \mathbb{R}$ and $\theta \in \mathbb{P}^{d-1}$ represent the angular coordinates.  Note that fixing $\theta$ corresponds to fixing a line through the origin. Now, expressing the density $\rho(r,\theta)$ is these coordinates, we have, by Fubini's theorem,
\begin{equation*}
1=\int_{\mathbb{P}^{d-1}}\Big(\int_{\mathbb{R}}\rho(r,\theta)dr\Big) d\theta
\end{equation*}
and so $r\mapsto \rho(r,\theta)$ is integrable for almost all $\theta$.  For each fixed $\theta$, and $m=1,2,...,N$ let $r \mapsto F^m(r,\theta)$ be the optimal map obtained in the previous section, coupling the density $r \mapsto  \rho(r,\theta)$ to itself.  For $m=1,2,3,...N$, define $T^m(r,\theta) =(F^m(r,\theta), \theta)$. 

\newtheorem{measpres}{Proposition}[subsection]
\begin{measpres}
 $T^m$ pushes $\rho$ to itself.  
 \end{measpres}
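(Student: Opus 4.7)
The plan is to invoke Fubini's theorem in the polar-like coordinates $(r,\theta)\in\mathbb{R}\times\mathbb{P}^{d-1}$ introduced just before the statement, and thereby reduce the claim to the measure-preservation property of the one-dimensional maps $F^m(\cdot,\theta)$ established in the previous subsection. The key observation is that $T^m$ acts only on the radial coordinate and leaves $\theta$ fixed, so it respects the fibration of $\mathbb{R}^d$ by lines through the origin.

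Concretely, for an arbitrary Borel set $A\subseteq\mathbb{R}^d$ I would write
\[
((T^m)_{\#}\rho)(A)=\int_{\mathbb{P}^{d-1}}\!\int_{\mathbb{R}}\mathbf{1}_A\bigl(F^m(r,\theta),\theta\bigr)\,\rho(r,\theta)\,dr\,d\theta,
\]
and argue that for $d\theta$-a.e.\ $\theta$ the inner integral equals $\int_{\mathbb{R}}\mathbf{1}_A(r,\theta)\rho(r,\theta)\,dr$. This last equality is exactly the statement that $F^m(\cdot,\theta)$ pushes the one-dimensional measure $\rho(\cdot,\theta)\,dr$ to itself, which is the content of the construction of the previous subsection applied to the (normalized) density $r\mapsto \rho(r,\theta)/\int_{\mathbb{R}}\rho(r',\theta)\,dr'$. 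Its hypotheses are met for $d\theta$-a.e.\ $\theta$ by the integrability observation already established via Fubini in the text. Applying Fubini once more on the outside then yields $((T^m)_{\#}\rho)(A)=\rho(A)$.

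The main technical point to check is joint measurability: one must ensure that $(r,\theta)\mapsto F^m(r,\theta)$ is Borel, rather than merely a $\theta$-indexed family of Borel maps. This reduces to the measurability of the cut-points $\theta\mapsto r^i(\theta)$, which are defined implicitly from the conditional cumulative distribution $t\mapsto\int_{-\infty}^{t}\rho(r,\theta)\,dr$. Monotonicity in $t$ together with joint measurability of this cumulative distribution in $(t,\theta)$ (again by Fubini) gives measurability of $r^i(\theta)$, and hence of $F^m$. Slices on which $\rho(\cdot,\theta)$ fails to be absolutely continuous or vanishes identically form a $d\theta$-null set and may be discarded without affecting the computation above. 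Once this routine measurability point is in hand, the proof is essentially a one-line consequence of Fubini and the one-dimensional result.
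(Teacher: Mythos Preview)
Your proof is correct and follows a genuinely different route from the paper's. The paper argues via the change of variables formula: it asserts that $T^m$ is bijective and smooth almost everywhere, writes $DT^m$ in $(r,\theta)$ coordinates as an upper-triangular block matrix with determinant $D_rF^m(r,\theta)=\rho(r,\theta)/\rho(T^m(r,\theta))$, and then invokes the Jacobian identity for pushforwards. Your argument is purely measure-theoretic: you slice by $\theta$ via Fubini and reduce directly to the one-dimensional pushforward identity $(F^m(\cdot,\theta))_{\#}\bigl(\rho(\cdot,\theta)\,dr\bigr)=\rho(\cdot,\theta)\,dr$ already built into the construction of the previous subsection. What your approach buys is robustness and clarity: it requires no differentiability of $F^m$ (which the paper's argument implicitly assumes but which is not guaranteed for a merely absolutely continuous $\rho$), and it makes transparent that the higher-dimensional statement is nothing more than the fibrewise one-dimensional fact plus Fubini. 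The paper's Jacobian computation, on the other hand, is the more classical density-level calculation and may feel more concrete to readers who think in those terms. Your explicit attention to joint measurability of $(r,\theta)\mapsto F^m(r,\theta)$ is also a point the paper passes over silently.
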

 \begin{proof}
Note that $T^m$ is clearly bijective almost everywhere and it is smooth almost everywhere, as $F^m$ is smooth on the interior of each interval $I^i$.  The derivative of $T^m$, in $(r,\theta)$ coordinates, is (in block form):

\begin{equation*}
DT^m(r,\theta) = \begin{bmatrix}
D_rF^m(r,\theta) &  D_{\theta}F^m(r,\theta)\\
0 & I\\
\end{bmatrix}
\end{equation*}
The determinant of this upper triangular matrix is $D_rF^m(r,\theta) = \frac{\rho(r, \theta)}{\rho((F^m(r,\theta), \theta))} =\frac{\rho(r, \theta)}{(\rho(T^m(r,\theta))} $, as $F^m(r,\theta)$ pushes forward the density $\rho(r,\theta)$ to itself, for fixed $\theta$. By the change of variables formula, $(T^{m})_{\#}(\rho) = \rho$.

\end{proof}
Now set 
\begin{equation}\label{construct}
\rho_N= (T^1, T^2, T^3, ... ,T^N)_{\#} \rho.
\end{equation}
  By the preceding proposition, this yields a measure on $(\mathbb{R}^d)^N$ whose marginals are all $\rho$; that is, an element of $\Pi(\rho)$.  We immediately obtain
  
  \newtheorem{bound}[measpres]{Corollary}
  \begin{bound}
  For the $\rho_N$ defined in equation (\ref{construct}), we have:
  \begin{equation*}
  E_N(\rho) \leq C_N(\rho_N) =\int_{\mathbb{R}^{Nd}}\sum_{i\neq j}\frac{1}{|x_i-x_j|}d\rho_N(x_1,x_2,...,x_N)= \int_{\mathbb{R}^d}\sum_{i\neq j}\frac{1}{|T_i(x)-T_j(x)|}d\rho(x)
  \end{equation*}
  \end{bound}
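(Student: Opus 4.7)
The corollary is essentially a direct assembly of three ingredients, and I would organize the proof as such without much elaboration.

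First, the preceding proposition gives $(T^m)_\# \rho = \rho$ for each $m=1,2,\ldots,N$. Since the $m$-th marginal of $\rho_N = (T^1,\ldots,T^N)_\# \rho$ is precisely $(T^m)_\# \rho$, this shows $\rho_N \in \Pi(\rho)$. The inequality $E_N(\rho) \leq C_N(\rho_N)$ is then immediate from the definition of $E_N[\rho]$ in (\ref{mk}) as the infimum of $C_N$ over $\Pi(\rho)$.

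Second, the middle equality is just the definition of $C_N(\rho_N)$. The final equality is an application of the change of variables (push-forward) formula: for any nonnegative Borel function $g$ on $\mathbb{R}^{Nd}$,
\begin{equation*}
\int_{\mathbb{R}^{Nd}} g(y_1,\ldots,y_N)\, d\rho_N(y_1,\ldots,y_N) = \int_{\mathbb{R}^d} g(T^1(x),\ldots,T^N(x))\, d\rho(x).
\end{equation*}
Applying this with $g(y_1,\ldots,y_N) = \sum_{i \neq j} 1/|y_i - y_j|$ gives the stated expression.

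There is no real obstacle here; the main content is already contained in the preceding proposition, which verifies that each $T^m$ preserves $\rho$. The integrand $\sum_{i \neq j} 1/|T_i(x) - T_j(x)|$ is finite $\rho$-almost everywhere by the construction in the previous subsection (distinct particles land in distinct intervals $I^j$ along each line through the origin), so no integrability pathology arises in the change of variables. The result then follows in a few lines.
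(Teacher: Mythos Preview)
Your proposal is correct and matches the paper's approach: the paper does not write out a separate proof for this corollary, stating only that ``we immediately obtain'' it from the preceding proposition, and your argument spells out precisely the obvious steps underlying that assertion.
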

  
\newtheorem{quality}[measpres]{Remark}
\begin{quality}
For measures which are not too spread out in the angular directions, we suspect our construction is nearly optimal.  Indeed, in the limit where the support of $\rho$ is a single line through the origin, the problem (\ref{mk}) reduces to the $1$-dimensional problem addressed in subsection 2.1.  On the other hand, our construction is far from optimal for measures which are not spread out in the radial direction.  Consider the limit when $\rho$ is concentrated at a single radius $r$.  In this case, the one dimensional "densities" $r \mapsto \rho(r,\theta)$ are each the sum of two Dirac masses; therefore, our construction yields an infinite total energy when $N >2$ and is thus far from optimal. 
\end{quality}
\subsection{Example: uniform radial density}
When $N=2$ and $\rho$ is radially symmetric, Cotar, Friesecke and Kl\"{u}ppelberg showed that the $\rho_N$ constructed above is in fact the optimizer in (\ref{mk}); that is, $E_2[\rho] =C_2(\rho_2)$ \cite{cfk}.  They also noted that, for general $N$, $E_2$ yields a lower bound on $E_N$: $E_N[\rho] \geq {N \choose 2}E_2[\rho]=\frac{N(N-1)}{2} E_2[\rho]$.  For radially symmetric measure, then, we now have both an upper and lower bound on $E_N$ which can be calculated explicitly.  Below, we compare these two bounds for a simple, radially symmetric $\rho$. 

\newtheorem{uniform}{Proposition}[subsection]
\begin{uniform}
Suppose that $\rho$ is the radial symmetric density given by $\rho(r) = \frac{1}{2}\chi_{[-1,1]}$.  Then $E_2[\rho]=1$ and $C_N[\rho_N] = \frac{-N^2}{2}+\frac{N}{2}+\frac{N^2}{2}(\sum_{n=1}^{N-1} \frac{1}{n})$, where $\rho_N$ is as in (\ref{construct}).
\end{uniform}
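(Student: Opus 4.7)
The plan is to establish the two claims separately: first evaluate $E_2[\rho]$ using the two-marginal optimality result of Cotar, Friesecke and Kl\"uppelberg, and then compute $C_N(\rho_N)$ directly from the formula in the Corollary, exploiting the homogeneity of the uniform density.

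For $E_2[\rho]=1$, the density $\rho(r)=\tfrac{1}{2}\chi_{[-1,1]}$ is symmetric about the origin, hence radially symmetric in one dimension, so by the result from \cite{cfk} recalled above the construction $\rho_2$ of Section 2.1 attains the infimum. I would then compute $C_2(\rho_2)$ directly: the halving gives $I^1=[-1,0]$ and $I^2=[0,1]$, with $F^2(x)=x+1$ on $I^1$ and $F^2(x)=x-1$ on $I^2$. Thus $|F^1(x)-F^2(x)|\equiv 1$ on the support of $\rho_2$, and integrating the Coulomb cost against the probability measure $\rho_2$ yields $E_2[\rho]=C_2(\rho_2)=1$.

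For the formula for $C_N(\rho_N)$, I would start from the expression supplied by the Corollary,
\[
C_N(\rho_N)=\int_{-1}^{1}\sum_{i<j}\frac{1}{|F^i(x)-F^j(x)|}\,d\rho(x).
\]
The intervals are $I^k=[-1+2(k-1)/N,\,-1+2k/N]$, all of length $2/N$, and the maps $F^i$ restrict to translations by $2(i-1)/N$ modulo the total length $2$. The key observation is that for every $x\in[-1,1]$ the set $\{F^1(x),\dots,F^N(x)\}$ is an arithmetic progression of $N$ points in $[-1,1]$ with common spacing $2/N$, since the translations are isometries on the circle $\mathbb{R}/2\mathbb{Z}$. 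Consequently the multiset of pairwise distances is $\{2k/N\}$ with multiplicity $N-k$ for $k=1,\dots,N-1$, independent of $x$, and the integrand is the constant
\[
\frac{N}{2}\sum_{k=1}^{N-1}\frac{N-k}{k}=\frac{N^2}{2}\sum_{k=1}^{N-1}\frac{1}{k}-\frac{N(N-1)}{2}.
\]
Integrating against the probability measure $\rho$ and rewriting $-\tfrac{N(N-1)}{2}=-\tfrac{N^2}{2}+\tfrac{N}{2}$ yields the stated formula.

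The only point requiring real verification is the invariance assertion: for $x\in I^k$ with $k>1$, some of the maps $F^i$ incorporate a wraparound (e.g.\ $F^i(x)=x+2(i-1)/N-2$ when $i+k-1>N$), so one should confirm the image set $\{F^i(x)\}$ remains an arithmetic progression in $[-1,1]$ with spacing $2/N$. This follows from a short case analysis or directly from the circle interpretation above. With this in hand the calculation collapses to the elementary identity $\sum_{k=1}^{N-1}(N-k)/k=N\sum_{k=1}^{N-1}1/k-(N-1)$, so no further obstacle arises.
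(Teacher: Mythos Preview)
Your approach is essentially the same as the paper's: both observe that for the uniform density the images $\{F^1(x),\dots,F^N(x)\}$ are always an arithmetic progression of spacing $2/N$, so the integrand is constant, and then reduce the computation to the elementary identity $\sum_{k=1}^{N-1}(N-k)/k=N\sum_{k=1}^{N-1}1/k-(N-1)$. Your explicit handling of the $E_2[\rho]=1$ claim via the Cotar--Friesecke--Kl\"uppelberg optimality result and your remark on the wraparound (the circle interpretation) add a little justification the paper leaves implicit, but the argument is otherwise identical.
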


\newtheorem{logar}[uniform]{Remark}
\begin{logar}

Note that $\sum_{n=1}^{N-1} \frac{1}{n} \leq 1 +\ln(N-1)$.  Thus, we have 
\begin{equation*}
E_N[\rho] \leq C_N(\rho) \leq \frac{N}{2}+\frac{N^2ln(N-1)}{2}.
\end{equation*}
On the other hand, 
\begin{equation*}
E_N[\rho] \geq {N \choose 2}E_2[\rho] = \frac{N(N-1)}{2}.
\end{equation*}
The ratio of the upper to lower bound, then, is roughly logarithmic in $N$.

\end{logar}
\begin{proof}

For all $x$, we have one $F^m(x)$ in each interval $I^i$.  Note that, if $F^m(x) \in I^i$ and $F^n(x) \in I^j$, we have $|F^m(x) - F^n(x)| = \frac{2|i-j|}{N}$.  We then have:

\begin{equation*}
\sum_{m \neq n}\frac{1}{|F^m(x) - F^n(x)|} = \sum_{i\neq j}\frac{N}{2|i-j|}
\end{equation*}
It is straightforward to calculate that there are $N-1$ pairs $(i,j)$ where $|i-j| =1$, $N-2$ pairs where $|i-j| =2$, etc, ending in one pair where $|i-j| = N-1$.  Thus the above is equal to 

\begin{eqnarray*}
\sum_{i\neq j}\frac{N}{2|i-j|} &=& \frac{N}{2}(\frac{N-1}{1} + \frac{N-2}{2} + ....\frac{1}{N-1})\\
&=& \frac{N}{2}(N-1+\frac{N}{2}-1+\frac{N}{3}-1+...+\frac{N}{N-1} -1)\\
&=& \frac{N}{2}(N + \frac{N}{2} +\frac{N}{3} + ...+\frac{N}{N-1}-(N-1))\\
&=& \frac{N^2}{2}(1+\frac{1}{2} + \frac{1}{3} + ...+ \frac{1}{N-1}) - \frac{N}{2}(N-1)\\
&=& \frac{N^2}{2} \sum_{n=1}^{N-1} \frac{1}{n}+ \frac{N}{2} -\frac{N^2}{2}
\end{eqnarray*}
Integrating the constant function $\sum_{m \neq n}\frac{1}{|F^m(x) - F^n(x)|}$ against uniform measure on $[-1,1]$ now yields the desired result.
\end{proof}
\section{Reduction to a $1$-dimensional problem for radially symmetric measures}
 
We show in this section that, for radially symmetric measures, the problem can actually be reduced to an optimal transport problem with $1$-dimensional marginals. Set

\begin{equation}\label{redcoul}
h(r_1,r_2,...,r_N)= \inf_{|x_1|=r_1,|x_2|=r_2,...,|x_N| =r_N}c(x_1,x_2,...,x_N)\footnote{We take $c(x_1,x_2,...,x_N) = \sum_{i\neq j}\frac{1}{|x_i-x_j|}$ here, although a similar reduction will hold for other rotationally invariant costs.}
\end{equation}
Note that in this section, we will work in polar coordinates rather than the polar-like coordinates used earlier.  That is, we will identify $\mathbb{R}^d \approx [0,\infty) \times \mathbb{S}^{d-1}$.  Radial symmetry simply means that the density $\rho(r,\theta)=\rho(r)$ of the marginal $\rho=\rho(r,\theta)dr d\theta$ is independent of $\theta$.

For each fixed $\vec{r} =(r_1,r_2,...,r_N) \in \mathbb{R}^N$, fix 
\begin{equation*}
(x_1,x_2,...,x_N) = x_1(\vec{r}),x_2(\vec{r}),...,x_N(\vec{r})
\end{equation*}
attaining the infimum in \eqref{redcoul}; note that for any rotation $A \in SO(d)$, 
\begin{equation*}
(A(x_1),A(x_2),...A(x_N))
\end{equation*}
attains the infimum as well.  Take $S_{r_i} = \{ x\in \mathbb{R}^3: |x| = r_i\}$.  Now let $\nu$ be Haar measure on the rotation group $SO(d)$ and let $\gamma^{r_1,r_2,...,r_N} = G_{\#}\nu$, where 
\begin{equation*}
G:SO(3) \rightarrow S_{r_1} \times S_{r_2} \times ...\times S_{r_N}
\end{equation*}
is defined by $G(A) = (A(x_1),A(x_2),...,A(x_N))$.  Then $\gamma^{r_1,r_2,...,r_N}$ is a probability measure on $S_{r_1} \times S_{r_2} \times ...\times S_{r_N}$ with uniform marginals.

Then consider the optimal transportation problem with $1$-dimensional marginals $\mu := \rho(r)dr$ and cost function $h$; that is, minimize
\begin{equation}\label{reduced}
 \int_{\mathbb{R}^N} h(r_1,r_2,...,r_N)d\gamma
\end{equation}
among measures $\gamma$ on $\mathbb{R}^N$ whose $1$-dimensional marginals are all $\mu$.

Let $\tilde{\gamma}$ be an optimizer in (\ref{reduced}) (existence is guaranteed by standard results in optimal transport theory) and set 
\begin{eqnarray}\label{redsol}
\rho_N(x_1,x_2,...,x_N)&=& \rho_N(r_1,,r_2,...,r_N,\theta_1,\theta_2,...,\theta_N) \nonumber\\
&=:& \tilde{\gamma}(r_1,r_2,...,r_N) \otimes \gamma^{r_1,r_2,...,r_N}(\theta_1,\theta_2,...,\theta_N). 
\end{eqnarray}
\newtheorem{character}{Theorem}[subsection]
\begin{character}\label{character}
Assume that $\rho$ is radially symmetric.  Then the $\rho_N$ defined in \ref{redsol} is optimal for (\ref{mk}).
\end{character}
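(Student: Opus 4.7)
The plan is to prove optimality by establishing matching upper and lower bounds, with the value of the reduced 1-dimensional problem $\int h\, d\tilde\gamma$ serving as the common value.

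First I would verify that the candidate $\rho_N$ defined in \eqref{redsol} is admissible, i.e., $\rho_N \in \Pi(\rho)$. By construction, the radial marginal of $\tilde\gamma$ on each factor is $\mu = \rho(r)\,dr$, while the angular marginal of $\gamma^{r_1,\ldots,r_N}$ on the $i$-th factor is the uniform probability on the sphere $S_{r_i}$ (this is because the pushforward of Haar measure on $SO(d)$ under $A \mapsto A(x_i(\vec r))$ is rotation-invariant on $S_{r_i}$ and hence uniform). Disintegrating, the $i$-th marginal of $\rho_N$ is obtained by coupling uniform measure on $S_r$ with the radial density $\rho(r)\,dr$; by radial symmetry of $\rho$, this recovers $\rho$ exactly.

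Next I would compute $C_N(\rho_N)$. By rotation-invariance of the Coulomb cost, $c(A(x_1),\ldots,A(x_N)) = c(x_1,\ldots,x_N) = h(\vec r)$ for every $A \in SO(d)$; hence $c \equiv h(\vec r)$ on the support of $\gamma^{r_1,\ldots,r_N}$. Fubini then gives
\begin{equation*}
C_N(\rho_N) = \int_{\mathbb{R}^N}\!\int c(x_1,\ldots,x_N)\, d\gamma^{r_1,\ldots,r_N}\, d\tilde\gamma(\vec r) = \int_{\mathbb{R}^N} h(\vec r)\, d\tilde\gamma(\vec r),
\end{equation*}
which is the minimum value of the reduced problem \eqref{reduced}.

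For the matching lower bound, I would take an arbitrary competitor $\rho_N' \in \Pi(\rho)$ and push it forward under the map $R:(x_1,\ldots,x_N)\mapsto(|x_1|,\ldots,|x_N|)$ to obtain $\gamma' := R_\# \rho_N'$ on $\mathbb{R}^N$. Since each marginal of $\rho_N'$ is $\rho$ and the radial marginal of $\rho$ is $\mu$, each 1-dimensional marginal of $\gamma'$ equals $\mu$, so $\gamma'$ is admissible for \eqref{reduced}. The pointwise inequality $c(x_1,\ldots,x_N) \geq h(|x_1|,\ldots,|x_N|)$ (immediate from the definition \eqref{redcoul}) then yields
\begin{equation*}
C_N(\rho_N') \geq \int h\circ R\, d\rho_N' = \int h\, d\gamma' \geq \int h\, d\tilde\gamma = C_N(\rho_N),
\end{equation*}
establishing optimality of $\rho_N$.

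The main technical obstacle I foresee is the measurability of the construction: one must choose the minimizers $(x_1(\vec r),\ldots,x_N(\vec r))$ in \eqref{redcoul} in a Borel-measurable way as $\vec r$ varies, so that $\vec r \mapsto \gamma^{r_1,\ldots,r_N}$ is a measurable family and the disintegration \eqref{redsol} genuinely defines a probability measure on $\mathbb{R}^{dN}$. This should follow from a standard measurable selection theorem applied to the continuous cost $c$ on the compact sets $S_{r_1}\times\cdots\times S_{r_N}$ (away from the set where some $r_i = 0$, which carries no mass under reasonable $\rho$); alternatively, one can sidestep the selection issue by defining $\gamma^{r_1,\ldots,r_N}$ directly as the rotation-invariant measure supported on the set of minimizers, which is characterized intrinsically. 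A minor point is that the infimum in \eqref{redcoul} may equal $+\infty$ on the diagonals where two $r_i$ agree and the minimizing configuration forces $x_i = x_j$; these configurations contribute $+\infty$ to both sides of the inequality and can be handled by the standard convention.
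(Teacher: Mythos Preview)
Your proof is correct and proceeds along a genuinely different route from the paper's. The paper invokes Kantorovich duality for the reduced problem \eqref{reduced} to obtain potentials $u_1,\ldots,u_N$ with $h(r_1,\ldots,r_N)\ge\sum_i u_i(r_i)$ and equality on $\operatorname{spt}(\tilde\gamma)$; it then lifts these to radial potentials $u_i(|x_i|)$ on $\mathbb{R}^d$, observes $c(x_1,\ldots,x_N)\ge h(|x_1|,\ldots,|x_N|)\ge\sum_i u_i(|x_i|)$ with equality on $\operatorname{spt}(\rho_N)$, and integrates. You instead give a purely primal argument: compute $C_N(\rho_N)=\int h\,d\tilde\gamma$ directly from rotation invariance, then for any competitor $\rho_N'$ push forward by the radial map $R$ to obtain an admissible $\gamma'$ and chain the pointwise bound $c\ge h\circ R$ with the optimality of $\tilde\gamma$. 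Your approach is more elementary in that it bypasses duality entirely and hence sidesteps the technicalities of establishing dual attainment for a cost that can take the value $+\infty$; the paper's approach, on the other hand, has the advantage of explicitly identifying the Kantorovich potentials $u_i(|x_i|)$ for the full $d$-dimensional problem, which may be of independent interest. Your remark on measurable selection of the minimizers $x_i(\vec r)$ applies equally to both arguments, since the paper's construction of $\rho_N$ presupposes it as well.
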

\begin{proof}
It is easy to see that $\rho_N$ has the appropriate marginals.  Now, the Kantorovich duality theorem \cite{K} and the optimality of $\tilde{\gamma}$ imply the existence of functions $u_1,...,u_N: (0, \infty) \rightarrow \mathbb{R}$ such that $h(r_1,...,r_N) - \sum_{i=1}^{N} u_i(r_i) \geq 0$ with equality when $(r_1, ...,r_N) \in spt (\tilde{\gamma})$.  We then have, for all $x_1,x_2,...,x_N$

\begin{eqnarray*}
c(x_1,...x_N)- \sum_{i=1}^{N} u_i(|x_i|)& \geq &h(|x_1|,...,|x_N|)-\sum_{i=1}^{N} u_i(|x_i|)\\
&\geq& 0
\end{eqnarray*}
with equality when $x_1,...,x_N$ attain the infimum in (\ref{redcoul}), \textit{and} $|x_1|,...|x_N| \in spt(\tilde{\gamma})$; that is, when $(x_1,...,x_N) \in spt(\rho_N)$.  Now, integrating with respect to any $\tilde{\rho_N} \in \Pi(\rho)$, we get
\begin{equation*}
\int_{\mathbb{R}^{Nd}} c(x_1,...x_N)d\tilde{\rho_N} \geq \sum_{i=1}^{N}\int_{\mathbb{R}^{Nd}} u_i(|x_i|)d\tilde{\rho_N}(x_1,x_2,...c_N) =  \sum_{i=1}^{N}\int_{\mathbb{R}^{d}} u_i(|x_i|)d\rho(x_i)
\end{equation*}
Now, noting that we have equality for $\tilde{\rho_N} = \rho_N$ and the right hand side is \textit{independent} of $\tilde{\rho_N} \in \Pi(\rho)$ yields the desired result.
\end{proof}
\subsection{The two marginal case}
As a consequence of Theorem \ref{character}, we obtain an easy proof of the following result, already implicit in \cite{seidl} (without rigorous justification) and also present in \cite{cfk} and \cite{bdpgg} (with more complicated proofs). 
\newtheorem{2marg}[character]{Corollary}

\begin{2marg}
When $N=2$ and $\rho_1 =\rho_1(r)$ is radially symmetric, the solution is unique and is concentrated on the graph of the function $x \mapsto -\frac{x}{|x|}f(|x|)$, where $f: (0, \infty) \mapsto (0, \infty)$ is defined implicitly by
\begin{equation}\label{funct}
\int_{r}^{-\infty} \rho_1(s)ds =\int_{0}^{f(r)} \rho_1(s)ds
\end{equation}
 
\end{2marg}

\begin{proof}
It is easy to see that the minimum in (\ref{redcoul}) is uniquely attained at antipodal points $x_2 =-\frac{r_2}{r_1}x_1$ and so $h(r_1,r_2)= \frac{1}{r_1+r_2}$.  Noting that $\frac{\partial^2 h}{\partial r_i \partial r_j} > 0$, a classical result in optimal transportation implies that the unique minimizer for the reduced problem (\ref{reduced}) is concentrated on the graph of a decreasing function, $r_2 =f(r_1)$.  As there is exactly one decreasing function pushing the measure $\mu$ to itself (namely \eqref{funct}), this implies the desired result.
\end{proof}
\subsection{The multi-marginal case}
Finally, we use Proposition \ref{character} to assert two new qualitative facts about the $N \geq 3$ case.  First, we show that the solution may be concentrated on a set with dimension \textit{greater} than $d$ (note that Monge type solutions, and their symmetrizations, would have dimension $d$).  Secondly, the solution may be \emph{non-unique}, even with an additional symmetry condition imposed; that is, there can be more than one minimizer in \eqref{mk} which is symmetric in the arguments $(x_1,x_2,...,x_N)$.

Our results in these directions (Propositions \ref{highd} and \ref{nonuniquesymm}) rely on certain assumptions on the structure on the support of the minimizers.  We suspect that these conditions hold generally, at least when the number of electrons is large.  Below (Lemma \ref{spans}) we provide an explicit example of a marginal $\rho$ for which optimizer satisfies the conditions in Proposition \ref{highd}.  Numerical calculations in \cite{sggs} suggest that the conditions in both Propositions \ref{highd} and \ref{nonuniquesymm} hold for other marginals (see Remark \ref{atom} below).

\newtheorem{highd}[character]{Proposition}

\begin{highd}\label{highd}
Suppose $N \geq 3$ and $d\geq 3$.   Let $\rho$ be a radially symmetric measure, and assume that for some optimizer $\gamma$ in \eqref{reduced} the following condition holds.

For all $r$ in some subset $I \subseteq (0,\infty)$ of positive measure, there is at least one point $\vec{r}=(r_1,r_2,r_3,...r_N) \in spt(\gamma)$, with $r_1=r$ for which there exist minimizing $(x_1(\vec{r}),x_2(\vec{r}),...,x_N(\vec{r}))$ in \eqref{redcoul} such that $x_1(\vec{r})$ and $x_i(\vec{r})$ are not co-linear, for some $i$.   
 
Then there exists an optimizer $\rho_N$ in \eqref{mk} whose support is at least $(2d-2)$-dimensional.
\end{highd}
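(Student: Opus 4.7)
The strategy is to exploit the flexibility in the construction of Theorem \ref{character}: for any choice of minimizer $(x_1(\vec{r}),\ldots,x_N(\vec{r}))$ in \eqref{redcoul} at each $\vec{r} \in \mathrm{spt}(\tilde{\gamma})$, the measure $\rho_N$ built via \eqref{redsol} is optimal, and its support contains the diagonal $SO(d)$-orbit of $(x_1(\vec{r}),\ldots,x_N(\vec{r}))$ for each such $\vec{r}$. The plan is to choose these minimizers so that, for $\vec{r}$'s whose first coordinate lies in $I$, the resulting orbits are as large as the geometry allows.

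For each $r \in I$, invoke the hypothesis to pick $\vec{r}(r) = (r, r_2(r), \ldots, r_N(r)) \in \mathrm{spt}(\tilde{\gamma})$ together with a minimizer $(x_1, \ldots, x_N)$ in \eqref{redcoul} for which $x_1$ and some $x_i$ are linearly independent; these span a $2$-plane $V \subset \mathbb{R}^d$. The stabilizer in $SO(d)$ of the full tuple is contained in the stabilizer of $(x_1, x_i)$, which fixes $V$ pointwise and is therefore isomorphic to $SO(V^\perp) \cong SO(d-2)$. Consequently the diagonal $SO(d)$-orbit of $(x_1,\ldots,x_N)$ is a smooth submanifold of dimension at least
\begin{equation*}
\dim SO(d) - \dim SO(d-2) = \binom{d}{2} - \binom{d-2}{2} = 2d - 3,
\end{equation*}
and this orbit lies in $\mathrm{spt}(\gamma^{\vec{r}(r)}) \subseteq \mathrm{spt}(\rho_N)$.

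To assemble the dimension count, note that the orbits associated with distinct $r, r' \in I$ are disjoint: the map $(x_1,\ldots,x_N) \mapsto |x_1|$ is constant on a diagonal $SO(d)$-orbit and separates different values of $r$. Parameterizing the union by
\begin{equation*}
(r, A) \in I \times SO(d) \; \longmapsto \; A \cdot (x_1(\vec{r}(r)), \ldots, x_N(\vec{r}(r)))
\end{equation*}
and quotienting by the $(d-2)(d-3)/2$-dimensional stabilizer in the $A$-factor produces a parameterization of the union whose image is $(2d-2)$-dimensional. Hence $\mathrm{spt}(\rho_N)$ has Hausdorff dimension at least $2d-2$, as desired.

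The main obstacle is to arrange a Borel measurable selection $r \mapsto (\vec{r}(r),(x_1,\ldots,x_N))$ so that the parameterization above is actually measurable and varies regularly enough in $r$ for the dimension count to be rigorous. This should be handled via a standard measurable selection theorem (e.g.\ Kuratowski--Ryll-Nardzewski) applied to the closed-valued multifunction that sends $r$ to its set of admissible choices, using the continuity of $h$ off the coincidence set together with the closedness of $\mathrm{spt}(\tilde{\gamma})$. Modulo this technical step, the argument is essentially an application of the orbit-stabilizer theorem combined with Fubini in $(r, A)$ coordinates.
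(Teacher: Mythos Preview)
Your approach is essentially the same as the paper's: both count dimensions by combining the one-dimensional parameter $r\in I$ with the $(2d-3)$-dimensional diagonal $SO(d)$-orbit of an optimal configuration; the paper slices the latter as $(d-1)+(d-2)$ by first letting $x_1$ range over its sphere and then rotating about the axis through $x_1$ (exhibiting the $x_2$-image as a $(d-2)$-sphere), whereas you compute $2d-3$ directly via orbit--stabilizer. The measurable selection issue you flag is not addressed in the paper either, which is equally informal on that point.
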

\begin{proof}
Choose $(x_1(\vec{r}),x_2(\vec{r}),...,x_N(\vec{r}))$ as in the hypothesis; without loss of generality, assume $x_1(\vec{r})$ and $x_2(\vec{r})$ are not co-linear.  Now, it is easy to check that the set 
\begin{equation*}
\{A(x_2(\vec{r})): A \in SO(d), \text{ }A(x_1(\vec{r}))=x_1(\vec{r})\} = \{y \in \mathbb{R}^d: y \cdot x_1(\vec{r}) = x_2(\vec{r}) \cdot x_1(\vec{r}) \text{ and } |y| =|x_2(\vec{r})| \}
\end{equation*}
is at least $(d-2)$-dimensional. Therefore, the set 
\begin{eqnarray*}
\{(A(x_1(\vec{r})),A(x_2(\vec{r})),....,A(x_N(\vec{r}))): A \in SO(d), \text{ }A(x_1(\vec{r}))=x_1(\vec{r})\}\\
=\{(x_1(\vec{r}),A(x_2(\vec{r})),....,A(x_N(\vec{r}))): A \in SO(d), \text{ }A(x_1(\vec{r}))=x_1(\vec{r})\} 
\end{eqnarray*}
is $(d-2)$-dimensional.

To summarize, this implies that for each $x_1 \in \mathbb{R}^d$ with radius $|x_1| =r \in I$, there is a $(d-2)$-dimensional set of points 
\begin{equation*}
\{(x_1,A(x_2),....,A(x_N)):A \in SO(d), A(x_1)=x_1)\}
\end{equation*}
in the support of the optimizer; the result follows. 
\end{proof}
We suspect that the condition on the non co-linearity of the optimally coupled vectors $x_1,x_2,...,x_N$ is generically true.  Below, we prove that this condition holds for a specific example marginal $\rho$.

\newtheorem{spans}[character]{Lemma}
\begin{spans}\label{spans}
Let  $N \geq 3$ and fix $r_1,r_2,....,r_N \in [1-\epsilon, 1+\epsilon]$.  Suppose $x_1,x_2,... ,x_N \in argmin_{|x_i|=r_i} c(x_1,x_2,...,x_N)$.  Then, for $\epsilon$ sufficiently small, and any distinct $i,j,k$, $x_i, x_j,x_k$ are not co-linear.  
\end{spans}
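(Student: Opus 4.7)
The approach will be a compactness-by-contradiction argument, reducing to the trivial case in which all radii equal $1$. The key geometric fact is that any affine line in $\mathbb{R}^d$ meets a fixed sphere in at most two points; therefore three distinct points on the unit sphere can never be collinear. The goal is to transfer this rigidity from the exact sphere to the nearby concentric spheres $|x_i|=r_i$ with $r_i\in[1-\epsilon,1+\epsilon]$, using continuity of the multi-marginal minimization in the radii.

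First, I would set up the limiting problem. Define
\begin{equation*}
h(r_1,\ldots,r_N)=\inf_{|x_i|=r_i}c(x_1,\ldots,x_N),
\end{equation*}
and observe that $h$ is finite and continuous at $(1,\ldots,1)$, since any configuration of $N$ well-separated points on the unit sphere (say, the vertices of a regular simplex after dilation to radii $r_i$) supplies a finite upper bound, uniformly in $\epsilon$ small. In particular, minimizers for the Coulomb cost on the spheres $|x_i|=r_i$ satisfy a uniform bound on $c$, which forces a uniform lower bound on the pairwise distances $|x_i-x_j|$.

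Next, I would run the contradiction. Suppose the conclusion fails, so that for some distinct indices $i,j,k$ and some sequence $\epsilon_n\downarrow 0$ there exist $r_1^{(n)},\ldots,r_N^{(n)}\in[1-\epsilon_n,1+\epsilon_n]$ and corresponding minimizers $(x_1^{(n)},\ldots,x_N^{(n)})$ with $x_i^{(n)},x_j^{(n)},x_k^{(n)}$ collinear. The $x_m^{(n)}$ are bounded (since $|x_m^{(n)}|\le 1+\epsilon_n$), so after passing to a subsequence I may assume $x_m^{(n)}\to x_m^*$ for each $m$, with $|x_m^*|=1$. Collinearity is a closed condition, so $x_i^*,x_j^*,x_k^*$ remain collinear. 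The uniform lower bound on pairwise distances guarantees that the $x_m^*$ are mutually distinct, so in particular $x_i^*,x_j^*,x_k^*$ are three distinct points of the unit sphere lying on a common line --- which is impossible since a line meets a sphere in at most two points. This contradiction will complete the proof.

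The main technical point to verify carefully is the uniform separation of the $x_m^{(n)}$, which is what rules out degenerate limits in which two of the three candidate collinear points coincide. Everything else is standard compactness. I do not expect to need that the limit $(x_1^*,\ldots,x_N^*)$ is itself a minimizer on the unit sphere --- only that its first three coordinates are distinct points of the sphere --- so uniqueness or regularity of the limiting minimizer plays no role in the argument.
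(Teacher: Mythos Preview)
Your compactness argument is correct and in fact establishes the stronger assertion that no three of the minimizers lie on a common \emph{affine} line. The paper, however, uses ``co-linear'' throughout to mean ``lying on a common line through the origin'' (equivalently, pairwise parallel as vectors); this is visible in the hypothesis of Proposition~\ref{highd}, where ``$x_1$ and $x_i$ are not co-linear'' can only mean non-parallel, and in the paper's own proof of the lemma.

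Under that reading the paper's argument is a single direct estimate rather than a limiting one: if $x_i,x_j,x_k$ all lie on a line through the origin, then by the pigeonhole principle two of them, say $x_i$ and $x_j$, lie on the same ray, so $|x_i-x_j|=|r_i-r_j|\le 2\epsilon$; hence $c(x_1,\dots,x_N)\ge 1/|x_i-x_j|\ge 1/(2\epsilon)$, which for small $\epsilon$ exceeds the cost of any fixed well-separated comparison configuration and contradicts minimality. This gives an explicit, effective threshold for $\epsilon$.

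Your route trades that explicitness for generality: the soft compactness argument handles the genuinely stronger affine statement (where the ``two on the same ray'' step is unavailable and one must instead use that a line meets a sphere in at most two points), at the price of not producing a quantitative $\epsilon$. The common core of both approaches is the same idea---three ``collinear'' points in the thin annulus force two of them to be close, which is incompatible with a uniformly bounded Coulomb cost---but the paper exploits the line-through-the-origin reading to make this immediate.
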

\begin{proof}
If $x_i,x_j,x_k$ were co-linear, at least two of them, say $x_i$ and $x_j$, would have to point in the same direction, which case $|x_i-x_j| =|r_i-r_j|<2\epsilon$.  But then $c(x_1,x_2,...,x_N) > \frac{1}{|x_i-x_j|} > \frac{1}{2\epsilon}$, which is a contradiction for small $\epsilon$ as other configurations clearly have lower total cost.
\end{proof}

\newtheorem{highdexplicit}[character]{Corollary}
\begin{highdexplicit}
Assume $N \geq 3$ and $d \geq 3$.  Then there exists a measure $\rho$ on $\mathbb{R}^d$ for which there is a measure $\rho_N$, on $\mathbb{R}^{Nd}$, optimal in \eqref{mk}, whose support is at least $(2d-2)$-dimensional. 
\end{highdexplicit}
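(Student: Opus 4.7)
The plan is to build $\rho$ by concentrating mass in a thin spherical shell, so that Lemma~\ref{spans} automatically forces the non co-linearity condition needed to invoke Proposition~\ref{highd}.

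First, I would fix $\epsilon>0$ small enough for the conclusion of Lemma~\ref{spans} to hold, and take $\rho$ to be any radially symmetric absolutely continuous probability measure on $\mathbb{R}^d$ whose radial marginal $\mu=\rho(r)dr$ is supported in $[1-\epsilon,1+\epsilon]$; a uniform radial density on this interval will do. Standard existence results in optimal transport then yield a minimizer $\gamma$ of the reduced problem \eqref{reduced}, which satisfies $spt(\gamma)\subseteq[1-\epsilon,1+\epsilon]^N$, and the measure $\rho_N$ assembled from $\gamma$ via \eqref{redsol} is optimal in \eqref{mk} by Theorem~\ref{character}.

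The key step is then to verify the hypothesis of Proposition~\ref{highd}. Disintegrating $\gamma$ against its first marginal $\mu$ shows that for $\mu$-almost every $r$ there is at least one $\vec{r}\in spt(\gamma)$ with $r_1=r$; since $\mu$ is absolutely continuous and nontrivial on $[1-\epsilon,1+\epsilon]$, the set $I$ of such $r$ has positive Lebesgue measure. For any such $\vec{r}$ and any minimizer $(x_1(\vec{r}),\ldots,x_N(\vec{r}))$ of \eqref{redcoul}, every radius lies in $[1-\epsilon,1+\epsilon]$, so Lemma~\ref{spans} applied to the triple $(1,2,3)$ tells us that $x_1,x_2,x_3$ are not co-linear. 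Consequently at least one of $x_2$ and $x_3$ fails to be parallel to $x_1$, supplying the index $i$ demanded by Proposition~\ref{highd}.

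Once the hypothesis is established, Proposition~\ref{highd} immediately delivers the claimed $(2d-2)$-dimensional lower bound on the support of $\rho_N$. I expect no serious obstacle; the only point requiring a bit of care is the disintegration argument used to extract, for a Lebesgue-positive set of first coordinates $r$, a genuine point of $spt(\gamma)$ with that prescribed first coordinate, but this is routine given the absolute continuity of $\mu$. Everything else is a direct assembly of the two preceding results.
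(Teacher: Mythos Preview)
Your proposal is correct and follows essentially the same strategy as the paper: concentrate the radial marginal $\mu$ in a thin shell $[1-\epsilon,1+\epsilon]$, invoke Lemma~\ref{spans} to obtain the non co-linearity hypothesis, and then apply Proposition~\ref{highd}. The only difference is that you take $\mu$ supported \emph{entirely} in the shell (so $spt(\gamma)\subseteq[1-\epsilon,1+\epsilon]^N$ is automatic), whereas the paper allows a small amount of mass outside and uses a union-bound to show $\gamma([1-\epsilon,1+\epsilon]^N)>1/2$; your version is a clean special case of the same argument.
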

\begin{proof}
Choose an absolutely continuous, radially symmetric marginal $\rho$, so that the corresponding measure $\mu:=\rho(r)dr$ is concentrated around $r=1$; precisely,

\begin{equation*}
\mu(\{r\in[1-\epsilon, 1+\epsilon]\}) >1-\frac{1}{2N},
\end{equation*}
with $\epsilon$ as in the previous lemma.  Then for any measure $\gamma \in \Pi(\mu)$, and any $i=1,2,...N$, we have 
\begin{equation*}
\gamma \big(\{(r_1,r_2,...,r_N): r_i \notin [1-\epsilon, 1+\epsilon]\}\big) = \mu ([1-\epsilon, 1+\epsilon]^C) < \frac{1}{2N}.
\end{equation*}
Therefore, 
  \begin{equation*}
  \gamma\Big(\bigcup_{i=1}^N \{(r_1,r_2,...,r_N): r_i \notin [1-\epsilon, 1+\epsilon]\}\Big) < \frac{1}{2}.
  \end{equation*}
Noting that the complement of $\bigcup_{i=1}^N \{(r_1,r_2,...,r_N): r_i \notin [1-\epsilon, 1+\epsilon]\}$ is $[1-\epsilon, 1+\epsilon]^N$, this implies $\gamma ([1-\epsilon, 1+\epsilon]^N) >\frac{1}{2}$.  

It follows that, for the optimal measure $\gamma$ in \eqref{redcoul}, letting $I$ be the projection of  $[1-\epsilon,1+\epsilon]^N \cap spt(\gamma)$, we have $\mu(I)>0$ and by absolute continuity, $I$ has positive Lebesgue measure.  By the preceding lemma, the condition in Proposition \ref{highd} is satisfied, and therefore, the result follows.
\end{proof}

\newtheorem{nonuniquesymm}[character]{Proposition}

\begin{nonuniquesymm}\label{nonuniquesymm}
Suppose $N \geq 3$ and $d \geq 2$.  Let the marginal $\rho$ be radially symmetric, and assume that for some optimizer in \eqref{reduced} there is \emph{at least one} point $\vec{r}=(r_1,r_2,r_3,...,r_N) \in spt(\gamma)$ for which the following two conditions hold:

\begin{enumerate}
\item The radii $r_1,r_2,...,r_N$ are distinct.
\item There exist $x_1(\vec{r}),x_2(\vec{r}),...,x_N(\vec{r})$ minimizing \eqref{redcoul} that span $\mathbb{R}^d$.
\end{enumerate}

Then there is more than one symmetric minimizer $\rho_N$ in \ref{mk}. 
\end{nonuniquesymm}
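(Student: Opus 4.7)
The plan is to construct two symmetric minimizers of \eqref{mk} whose supports differ, by exploiting the reflection freedom at $\vec{r}^{\ast}$ guaranteed by the spanning hypothesis. Fix the optimizer $\gamma$ of \eqref{reduced} from the hypothesis, the point $\vec{r}^{\ast} = (r_{1}^{\ast}, \ldots, r_{N}^{\ast}) \in spt(\gamma)$ with pairwise distinct coordinates, and a minimizer $(x_{1}^{\ast}, \ldots, x_{N}^{\ast})$ of \eqref{redcoul} at $\vec{r}^{\ast}$ spanning $\mathbb{R}^{d}$. Fix any reflection $R \in O(d) \setminus SO(d)$; the reflected configuration $(R x_{1}^{\ast}, \ldots, R x_{N}^{\ast})$ is another minimizer of \eqref{redcoul}, but lies in an $SO(d)$-orbit \emph{disjoint} from that of $(x_{1}^{\ast}, \ldots, x_{N}^{\ast})$: if $A x_{i}^{\ast} = R x_{i}^{\ast}$ for all $i$ and some $A \in SO(d)$, then $A = R$ on the span of $\{x_{i}^{\ast}\} = \mathbb{R}^{d}$, contradicting $R \notin SO(d)$. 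Call these two disjoint $SO(d)$-orbits of minimizers at $\vec{r}^{\ast}$ the $+$-orbit and the $-$-orbit.

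Since distinctness of the radii and spanning of the minimizers are both open conditions, the $\pm$-labeling extends by local measurable selection of a minimizer section to a small neighborhood $U$ of $\vec{r}^{\ast}$, which we may take small enough that $U \cap \sigma(U) = \varnothing$ for every non-identity $\sigma \in S_{N}$. For each $\vec{r} \in U$, let $\gamma^{\vec{r}, +}$ and $\gamma^{\vec{r}, -}$ be the Haar push-forwards onto the two orbits, defined exactly as is the measure $\gamma^{r_{1}, r_{2}, \ldots, r_{N}}$ preceding \eqref{redsol}; both have uniform marginals on the spheres $S_{r_{i}}$ and are supported on minimizers of $h$. Build two measures $\rho_{N}^{\pm} \in \Pi(\rho)$ by the formula \eqref{redsol} with $\gamma^{\vec{r}, \pm}$ in place of $\gamma^{r_{1}, r_{2}, \ldots, r_{N}}$ on $U$, and a common fixed valid conditional elsewhere. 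The Kantorovich-duality argument in the proof of Theorem \ref{character} applies verbatim to both, so each $\rho_{N}^{\pm}$ is an optimizer of \eqref{mk}, and hence so are the symmetrizations $\widetilde{\rho_{N}^{\pm}}$.

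To conclude $\widetilde{\rho_{N}^{+}} \neq \widetilde{\rho_{N}^{-}}$, consider the set $E := \{(y_{1}, \ldots, y_{N}) \in (\mathbb{R}^{d})^{N} : (|y_{1}|, \ldots, |y_{N}|) \in U\}$. For each $\sigma \in S_{N}$, the measure $\sigma_{\#} \rho_{N}^{\pm}$ contributes to $E$ only from configurations at radii in $\sigma(U) \cap spt(\gamma)$; after extending the $\pm$-labeling to each such translate by $\sigma$-translating the labeling on $U$, every contribution at radii in $U$ lies in a $\pm$-orbit at the corresponding point. Consequently $spt\bigl(\widetilde{\rho_{N}^{+}}\big|_{E}\bigr) \subseteq \bigcup_{\vec{r} \in U}(+\text{-orbit at }\vec{r})$ and $spt\bigl(\widetilde{\rho_{N}^{-}}\big|_{E}\bigr) \subseteq \bigcup_{\vec{r} \in U}(-\text{-orbit at }\vec{r})$; these two sets are disjoint by the spanning assumption, and both restrictions are nonzero (the $\sigma = \mathrm{id}$ summand alone contributes, since $\gamma(U) > 0$). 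Therefore $\widetilde{\rho_{N}^{+}} \neq \widetilde{\rho_{N}^{-}}$ and we have exhibited two distinct symmetric minimizers.

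The principal technical subtlety I foresee is the consistent measurable labeling of the $\pm$-orbits throughout $U$ and its permutation-translates, which ensures that symmetrization does not mix the two orbit-types. This should follow from continuous dependence of the minimizers on $\vec{r}$ combined with openness of the spanning condition, via a standard measurable selection theorem applied to the closed multi-function $\vec{r} \mapsto \mathrm{argmin}_{|x_{i}| = r_{i}} c(x_{1}, \ldots, x_{N})$.
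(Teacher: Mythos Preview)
Your approach is correct in spirit and shares the paper's core idea: use the spanning hypothesis to separate the $SO(d)$-orbit of a minimizing configuration from its reflection, and use distinctness of the radii to ensure symmetrization does not undo this separation. The execution, however, is more involved than necessary and the symmetrization step is slightly garbled. For $\sigma\neq\mathrm{id}$, the contribution of $\sigma_\#\rho_N^{\pm}$ to $E$ comes from the \emph{common} conditional on $\sigma^{-1}(U)$, not from a $\pm$-orbit; the correct observation is simply that these contributions coincide for $+$ and $-$, so that $(\widetilde{\rho_N^{+}}-\widetilde{\rho_N^{-}})\big|_E=\tfrac{1}{N!}(\rho_N^{+}-\rho_N^{-})\big|_E$, which is nonzero. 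Your acknowledged selection subtlety is also genuine: to get $\rho_N^{+}\neq\rho_N^{-}$ on a set of positive $\gamma$-measure you need the selected minimizers to span on a set of positive measure in $U$, not just at $\vec r^{\ast}$, and a merely measurable selection need not deliver this.

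The paper sidesteps all of this with a cleaner construction. Rather than working on a neighborhood $U$ with a local $\pm$-labeling, it builds two \emph{global} optimizers: $\rho_N$ via Haar measure on $SO(d)$ and $\overline{\rho_N}$ via Haar measure on the full orthogonal group $O(d)$, using the same minimizer selection everywhere. It then compares \emph{supports} at the single point $\vec r^{\ast}$: for any $\overline{A}\in O(d)\setminus SO(d)$, the point $(\overline{A}x_1^{\ast},\ldots,\overline{A}x_N^{\ast})$ lies in $\mathrm{spt}(\tilde{\overline{\rho_N}})$ but not in $\mathrm{spt}(\tilde{\rho_N})$. Symmetrization is handled not by shrinking $U$ away from its permutation-translates, but by choosing the minimizers $x_i(\vec r)$ \emph{symmetrically} in $\vec r$ whenever the radii are distinct, so that permuting $\vec r$ does not produce a new $SO(d)$-orbit. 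This global support-level argument avoids both the neighborhood bookkeeping and the positive-measure selection issue you flag.
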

Note that the first condition seems to be generic; it would be surprising if the optimizer coupled exclusively points with common radii together.  The second condition cannot hold if the dimension $d$ is larger than the number of electrons $N$.  In fact,  it is straightforward, using Lagrange multipliers, to show that the minimizing vectors $x_1(\vec{r}),x_2(\vec{r}),...,x_N(\vec{r})$ are linearly dependent, and so the first condition cannot hold when $d =N$ either.  Nonetheless, I suspect the second condition holds generically when $d<N$. 
\begin{proof}
Note that can choose the minimizers $(x_1(\vec{r}),x_2(\vec{r}),...,x_N(\vec{r}))$ in \eqref{redcoul} to be symmetric whenever the radii are distinct; that is, for any permutation $\sigma$ on $1,2,...N$, we can assume 
\begin{equation*}
x_i(r_1,r_2,...r_N) = x_{\sigma(i)}(r_{\sigma(1)},r_{\sigma(2)}...r_{\sigma(N)}).
\end{equation*}

Now, the procedure above produces an optimal measure $\rho_N$, defined by \eqref{redsol};  note that we can find another minimizer $\overline{\rho_N}$ simply by replacing in the procedure leading up to \eqref{redsol} Haar measure $\nu$ on $SO(d)$ with Haar measure on the whole orthogonal group $O(d)$. Let $\tilde{\rho}$ and $\tilde{\overline{\rho}}$ be the symmetrizations of $\rho$ and $\overline{\rho}$, respectively.  We need only to verify that $\tilde{\rho_N} \neq \tilde{\overline{\rho_N}}$.  To do this, we will exhibit a point in the support of $\tilde{\overline{\rho_N}}$ which is not in the support of  $\tilde{\rho_N}$.

Choose $\vec{r}=(r_1,r_2,...,r_N) \in spt(\gamma)$ with distinct radii and $(x_1,x_2,...,x_N):=(x_1(\vec{r}),x_2(\vec{r}),...,x_N(\vec{r})) \in spt(\rho_N)$ which span $\mathbb{R}^d$.  It follows that for \textit{any} $\overline{A} \in O(d)$ such that $\overline{A} \notin SO(d)$, we have
\begin{equation*}
(\overline{A}(x_1),\overline{A}(x_2),...,\overline{A}(x_N)) \neq (A(x_1),A(x_2),...,A(x_N))
\end{equation*}
for \textit{all} $A \in SO(3)$ (as if $(\overline{A}(x_1),\overline{A}(x_2),...,\overline{A}(x_N)) = (A(x_1),A(x_2),...,A(x_N))$, then $A = \overline{A}$ by the span condition.  It now follows that 
\begin{equation*}
(\overline{A}(x_1),\overline{A}(x_2),...,\overline{A}(x_N)) \in spt(\overline{\rho_N}) \subseteq spt(\tilde{\overline{\rho_N}})
\end{equation*}
Now, it follows from the construction that the only points $z_1,z_2,...,z_N$ in the support of $\rho_N$ with $|z_i| = r_i$ for all $i=1,2,..,N$ are points of the form $(A(x_1),A(x_2),...,A(x_N))$ with $A \in SO(d)$.  The only points $z_1,z_2,...,z_N$ in the support of $\tilde{\rho_N}$ with $|z_i| = r_i$ are points of the form $(A(x_{\sigma(1)}(\vec{r}_{\sigma})),A(x_{\sigma(2)}(\vec{r}_{\sigma})),...,A(x_{\sigma(N)}(\vec{r}_{\sigma})))$ with $A \in SO(d)$, where $\sigma \in S_N$ is a permutation and 

\begin{equation*}
\vec{r}_{\sigma} = (r_{\sigma(1)},r_{\sigma(2)},...,r_{\sigma(N)})
\end{equation*}
But as the minimizers $(x_1(\vec{r}),x_2(\vec{r})...,x_N(\vec{r}))$ are symmetric, $(A(x_{\sigma(1)}(\vec{r}_{\sigma})),A(x_{\sigma(2)}(\vec{r}_{\sigma})),...,A(x_{\sigma(N)}(\vec{r}_{\sigma})))=(A(x_1),A(x_2),...,A(x_N))$, and so
\begin{equation*}
(\overline{A}(x_1),\overline{A}(x_2),...,\overline{A}(x_N)) \notin spt(\rho_N),
\end{equation*}
from which the result follows.

\end{proof}

\newtheorem{atom}[character]{Remark}
\begin{atom}\label{atom}\textbf{(Numerical evidence in support of the hypotheses)}
In \cite{sggs}, the authors compute what they call co-motion functions.  Using spherically symmetric densities $\rho$ on $\mathbb{R}^3$ corresponding to the ground states of the Lithium atom (N=3) and Beryllium atom (N=4), they find functions $f_i$, $i=2,...,N$, such that, in our notation, $\gamma:=(Id,f_2,...,f_N)|_{\#} \mu \in \Pi(\mu)$.  They also numerically construct potential functions $v(x)$ and verify numerically that 

\begin{equation*}
(x_1,x_2,...,x_N) \mapsto \sum_{i\neq j}^N\frac{1}{|x_i-x_j|} -\sum_{i=1}^N v(x_i)
\end{equation*}
is minimized at each point on the set $\{x,f_2(x),...,f_N(x)\}$.  This easily implies that, in our notation, $\gamma$ is optimal in \eqref{reduced}. They also compute the minimizing angles numerically, and one can readily verify from their calculations that:

\begin{enumerate}
\item For all $x$, the radii in $|x|, |f_2(x)|,...,|f_N(x)|$ are all distinct.
\item For the Beryllium density with $N=4$, the vectors $x,f_2(x),...f_N(x)$ are linearly independent. 
\end{enumerate}
 
Proposition \ref{highd} then implies that for these densities there are optimal measures $\rho_N$ with at least $2d-2=4$-dimensional support, while Proposition \ref{nonuniquesymm} implies that there is more than one symmetric optimal measure for the Beryllium density with $N=4$.
\end{atom}
\bibliographystyle{plain}
\bibliography{biblio}
\end{document}